\documentclass[11pt, letterpaper, english]{amsart}
\usepackage{amsmath} 
\usepackage{amsthm} 
\usepackage{amssymb} 
\usepackage[dvipsnames]{xcolor}
\usepackage{amscd} 
\usepackage{mathtools}
\usepackage{booktabs}
\usepackage[boxed]{algorithm2e}
\usepackage{subcaption}

\usepackage{array} 
\usepackage[cal=boondoxo,scr=euler]{mathalfa}
\usepackage[backref=page,linktocpage]{hyperref} 
\usepackage{cleveref} 
\usepackage{caption} %
\usepackage{graphics,graphicx} 

\usepackage{enumerate} 

\DeclareMathAlphabet{\mathsf}{OT1}{\sfdefault}{m}{n}

\newcommand{\nocontentsline}[3]{}
\newcommand{\tocless}[2]{\bgroup\let\addcontentsline=\nocontentsline#1{#2}\egroup}

\hypersetup{
    colorlinks = true,
    linkbordercolor = {white},
    linkcolor = {NavyBlue},
    anchorcolor = {black},
    citecolor = {NavyBlue},
    filecolor = {cyan},
    menucolor = {NavyBlue},
    runcolor = {cyan},
    urlcolor = {NavyBlue}
}

\usepackage[margin=1.65in]{geometry} 

\usepackage{verbatim}

\usepackage{scalerel}

\newtheoremstyle{teoremas}
{12pt}
{13pt}
{\itshape}
{}
{\bfseries}
{}
{.5em}
{}

\theoremstyle{teoremas}
\newtheorem{theorem}{Theorem}[section]

\newtheorem{proposition}[theorem]{Proposition}

\newtheoremstyle{definition}
{12pt}
{12pt}
{}
{}
{\bfseries}
{}
{.5em}
{}

\theoremstyle{definition}

\newtheorem{conjecture}[theorem]{Conjecture}

\title{Almost factorial many facets for $0/1$-polytopes}

\author{Federico Castillo}
\address{Centro de Investigaci\'on en Matem\'aticas, Guanajuato}
\email{federico.castillo@cimat.mx}

\author{Luis Ferroni}
\address{Dipartimento di Matematica, Universit\`a di Pisa}
\email{luis.ferroni@unipi.it}

\begin{document}

\begin{abstract}
    A long-standing question posed by Fukuda (1995) and Ziegler (2000) inquires about the asymptotic behavior of $g(n)$, the maximum number of facets that an $n$-dimensional $0/1$-polytope can have. A remarkable result by B\'ar\'any and P\'or (2001) via probabilistic methods established that $g(n)$ is at least superexponential in $n$. In this paper, we propose a drastic change of perspective, which leads us to show that for each $n\geq 10$ there exists a $0/1$-polytope having at least $(n-\lceil 2\log_2 (n)\rceil - 1)!$ facets. This provides a very significant improvement over the currently known lower bounds for $g(n)$. Furthermore, when combined with known upper bounds, our construction establishes the correct asymptotic behavior of $\log g(n)$ up to an error of $O((\log n)^2)$. The methods employed throughout this paper are elementary and fully deterministic. The underlying ideas in our proof stem from the combinatorics of hypersimplices and permutohedra.
\end{abstract}

\maketitle

\section{Introduction}

A recurring theme in mathematics, especially within combinatorial optimization and adjacent areas, consists of estimating the number of facets of certain classes of convex polyhedra. 
Well-known tools to approach linear-programming problems rely on an outer description of a polyhedron, and the number of facets of this object imposes a lower bound on the number of inequalities needed to describe it. 

An important class of convex polytopes that pervades mathematics is that of $0/1$-polytopes, i.e., those having vertices for which every coordinate is equal to $0$ or $1$.
We warmly recommend \cite{ziegler2000lectures} for a survey of this class.
Even though their vertex description is simple, these polytopes exhibit complex behavior; for instance, the entries of the integer facet normals can be huge \cite{alon1997anti}.

A fundamental question posed over three decades ago by Fukuda (at Oberwolfach in 1995) and Ziegler~\cite{ziegler2000lectures} asks for the asymptotic behavior of the following quantity:
    \[ g(n) := \max \{\text{$f_{n-1}(\mathcal{P})$} : \text{$\mathcal{P}\subseteq \mathbb{R}^n$ full-dimensional $0/1$-polytope}\},\]
where $f_i(\mathcal{P})$ stands for the number of $i$-dimensional faces of $\mathcal{P}$. 

Kortenkamp, Richter-Gebert, Sarangarajan, and Ziegler studied this question systematically in \cite{kortenkamp1997extremal}, where they surveyed and improved what was known at the time.

The upper bound shown in \cite{kortenkamp1997extremal} is 
\(g(n) \leq n! - (n-1)! + 2(n-1),\) and its proof relied on an approach attributed to B\'ar\'any.
This was improved by Fleiner--Kaibel--Rote \cite{fleiner2000upper}, who proved that \begin{equation}\label{eq:upper-bound}
    g(n) \leq c(n-2)!
\end{equation}
for some absolute constant $c\leq 30$ and all $n$ sufficiently large. The lower bound presented in \cite{kortenkamp1997extremal} is exponential in $n$.
Soon after, there was a giant leap by B\'ar\'any and P\'or \cite{barany-por} showing that there exists an absolute constant $c>0$ such that
    \begin{equation} \label{eq:bound-barany-por}
    g(n) \geq \left(\frac{cn}{\log n}\right)^{n/4}.
    \end{equation}
Remarkably, B\'ar\'any and P\'or's proof is non-constructive. 
They study $0/1$-polytopes by choosing vertices uniformly at random and then bound the expected value of the number of facets.

It is worth mentioning that over the last three decades there have been improvements to the lower bound in equation~\eqref{eq:bound-barany-por} that refine the approach of \cite{barany-por}.
The work by Gatzouras, Giannopoulos, and Markoulakis \cite{gatzouras-giannopoulos-markoulakis1,gatzouras-giannopoulos-markoulakis2} improved \eqref{eq:bound-barany-por} by doubling the exponent, i.e., they showed the existence of another absolute constant $c>0$ such that 
    \begin{equation} \label{eq:bound-ggm}
    g(n) \geq \left(\frac{cn}{\log n}\right)^{n/2}.
    \end{equation}
Very recently, Friedland \cite{friedland} pushed this further by showing that the logarithmic denominator can be suppressed, i.e., he showed the existence of yet another absolute constant $c>0$ for which
    \begin{equation} \label{eq:bound-friedland}
    g(n) \geq \left(cn\right)^{n/2}.
    \end{equation}
In the introduction of \cite{barany-por}, B\'ar\'any and P\'or offer an analogy with the problem of bounding the expected number of facets of an $n$-dimensional polytope defined as the convex hull of $N$ points on the unit sphere, a question that has been studied in \cite{buchta1984random, buchta1985stochastical}.
If the class of $n$-dimensional $0/1$-polytopes were to behave similarly, then there would be an upper bound of the form $(cn)^{n/2}$, where $c$ is a constant.
Under that assumption, the bound in equation \eqref{eq:bound-friedland} would determine the exact growth rate of $\log g(n)$. Our result unveils that the analogy is not accurate:
we show that $\log g(n)$ is asymptotic to $n\log n$, rather than to $n\log n/2$ as suggested.
In particular, no upper bound of the form $(cn)^{n/2}$ can exist.


\begin{theorem}\label{thm:main}
    For every $n\geq 10$ there exists an $n$-dimensional $0/1$-polytope $\mathcal{P}\subseteq \mathbb{R}^n$ whose number of facets is at least $(n-\lceil 2\log_2 n\rceil - 1)!$.
\end{theorem}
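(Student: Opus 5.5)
The plan is to build $\mathcal{P}$ as a "lift" of the cube $[0,1]^m$ with $m$ close to $n$. The lift will be chosen so that the regular subdivision it induces is the braid (Freudenthal) triangulation of $[0,1]^m$. That triangulation has $m!$ maximal simplices, and I hope to turn them into roughly $m!$ facets.

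Write $n=m+k$ with $k\approx 2\log_2 n$. Fix a strictly concave sequence $\varphi(0),\dots,\varphi(m)$. The set function $S\mapsto\varphi(|S|)$ is then strictly submodular. Its Lovász extension is the convex piecewise linear function
\[
f(x)=\sum_{i}\bigl(\varphi(i)-\varphi(i-1)\bigr)\,x_{\sigma(i)},
\]
where $\sigma$ sorts $x$ in decreasing order. This $f$ is linear exactly on the chambers $\Delta_\sigma=\{x_{\sigma(1)}\ge\dots\ge x_{\sigma(m)}\}$, and the slopes on distinct chambers are distinct. Hence the lower hull of the points $(e_S,\varphi(|S|))$, $S\subseteq[m]$, has exactly $m!$ facets. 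They are indexed by the maximal chains $\emptyset\subset S_1\subset\dots\subset S_m=[m]$, i.e.\ by permutations.

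To make the lift $0/1$, I replace the single height coordinate by $k$ extra $0/1$ coordinates. I choose vectors $y(0),\dots,y(m)\in\{0,1\}^k$, which is possible since $2^k\ge n^2>m+1$, together with a linear functional $\ell$ on $\mathbb{R}^k$ such that $\ell(y(j))=\varphi(j)$ is strictly concave in $j$. I then set
\[
\mathcal{P}=\operatorname{conv}\{(e_S,y(|S|)) : S\subseteq[m]\}.
\]
For example, the $y(j)$ could be points of a fixed Hamming-weight layer arranged in convex position. Full-dimensionality follows if the $y(j)$ affinely span $\mathbb{R}^k$.

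For each permutation $\sigma$, the linear functional $(x,y)\mapsto\ell(y)-\langle c_\sigma,x\rangle$ (with $c_\sigma$ the slope of $f$ on $\Delta_\sigma$) is minimized on $\mathcal{P}$ exactly at the $m+1$ vertices lying over the chain of $\sigma$. So these vertices span a face $F_\sigma$ of $\mathcal{P}$, and the faces $F_\sigma$ are pairwise distinct.

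The remaining, and main, step is to pass from the $m!$ faces $F_\sigma$ to many facets. Every $F_\sigma$ lies in some facet, but a single facet might contain many of them. My plan is to show that any facet $G$ of $\mathcal{P}$ contains $F_\sigma$ for at most $\mathrm{poly}(n)$ permutations $\sigma$ (ideally at most $n^2$ or so), which gives $f_{n-1}(\mathcal{P})\ge m!/\mathrm{poly}(n)$. The first attempt will be as follows. Project $G$ to the $x$-coordinates. A supporting hyperplane of $G$ restricts, on each layer $|S|=j$, to a linear function on the hypersimplex $\Delta(j,m)$. Containing the chain vertices of many $\sigma$ forces that linear function to be constant on large parts of the layers, and I expect this to pin down $G$ by only $O(\log n)$ bits of "$y$-data" per layer. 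Equivalently, I will try to choose the $y(j)$ generically enough that each facet is determined by the chain $F_\sigma$ together with a small choice among the $y$-directions. The counting then loses at most a factor of about $2^k\le n^{O(1)}$.

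Finally, I would take $m=n-\lceil 2\log_2 n\rceil$ and check the elementary inequality
\[
\frac{m!}{\mathrm{poly}(n)}\ \ge\ (n-\lceil 2\log_2 n\rceil-1)!\qquad\text{for } n\ge 10,
\]
adjusting $k$ by one if needed. The slack of a full factor $m$ between $m!$ and $(m-1)!$ is exactly what absorbs the polynomial loss. This is why I expect the facet-multiplicity bound to be the crux: it has to be at most about $m$, and not merely polynomial, for the stated constant to come out.
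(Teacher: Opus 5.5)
Your first half is sound, and it is the paper's starting point. The paper lifts layer $j$ to height $t_j$ using the functional $\sum_j s_j y_j$ and the binary encodings $y(j)=\mathbf{f}_{\mathrm{R}_j}$. The gap is the passage from the faces $F_\sigma$ to facets. The multiplicity bound you need is not just unproven; with your parameters it is false.

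In $\operatorname{ConvexHull}\{(e_S,y(|S|))\}$ the face $F_\sigma$ is an $m$-simplex, so it has codimension $k$. Its normal cone is
\[
\Bigl\{(c,d) : c_{\sigma(1)}\le\cdots\le c_{\sigma(m)},\ \textstyle\sum_{i\le j}c_{\sigma(i)}+\langle d,y(j)\rangle \text{ independent of } j\Bigr\},
\]
a $k$-dimensional cone cut out by the $m-1$ inequalities $c_{\sigma(j)}\le c_{\sigma(j+1)}$. A facet $G\supseteq F_\sigma$ is an extreme ray of this cone, so at least $k-1$ of these inequalities are tight. Every $\sigma'$ that also sorts $c$ has the same prefix sums, hence $F_{\sigma'}\subseteq G$. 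If $b_i$ are the sizes of the tie blocks, there are $\prod_i b_i!\ge 2^{\sum_i(b_i-1)}\ge 2^{k-1}$ such $\sigma'$.

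With $k=\lceil 2\log_2 n\rceil$ (or one less), every facet through a chain face therefore contains at least $n^2/4>m$ of them. So the estimate $m!/(\text{max multiplicity})$ is always below $(m-1)!$, and the bound ``multiplicity at most about $m$'' can never hold. Lowering $k$ toward $\log_2 m$ does not rescue the argument. You would then need every extreme ray of that cone to have only isolated pairwise ties, and nothing in your choice of $y(j)$ controls that. Your sketch about ``$O(\log n)$ bits of $y$-data per layer'' does not address it.

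There is also a smaller error. Full-dimensionality needs more than the $y(j)$ affinely spanning $\mathbb{R}^k$. If $j$ is an affine function of $y(j)$, as with a binary encoding of $j$, then $\sum_i x_i-\ell'(y)$ is constant on your polytope for a suitable $\ell'$.

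The missing idea is to add vertices so that each chain face becomes a facet outright, with no division by a multiplicity. The paper makes the coefficients of the height functional themselves heights: each super triangular number $s_j$ ($j\ge 2$) equals a triangular number $t_{k_j}$. So the single unit vector $\mathbf{f}_j$ is a second legitimate lift of layer $k_j$. Adding $\mathscr{W}_m=\{\mathbf{e}_{\mathrm{T}}+\mathbf{f}_j : \#\mathrm{T}=k_j,\ 2\le j\le p\}$ keeps $\sum_i\pi(i)x_i\ge\sum_j s_jy_j$ valid. It also puts exactly one extra point, $\mathbf{e}_{\mathrm{B}_{k_j}}+\mathbf{f}_j$, on $\mathcal{H}_\pi$ for each $j$. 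The differences $\mathbf{f}_j-\mathbf{f}_{\mathrm{R}_{k_j}}$ form a unitriangular system because $\mathrm{R}_{k_j}\subseteq[j-1]$. Hence $\mathcal{H}_\pi\cap\mathcal{P}$ contains $m+p$ affinely independent points and is a simplicial facet for every $\pi$, giving $m!$ facets. The $2\log_2 n$ then comes from needing every integer up to $t_m=\Theta(m^2)$ to be representable, together with $S_p\ge 2^p/4$. Pyramids fill in the remaining dimensions.
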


The polytopes we use to achieve this bound have explicit coordinates for their vertices.
Our method is constructive and quite elementary, using nothing but basic linear algebra.

In contrast with the probabilistic approach, our approach is informed by the combinatorics of hypersimplices, a class of polytopes that plays a prominent role in matroid theory.
The starting point was the second author's previous work with Schr\"oter \cite{ferroni2025face}, which treats the computation of $f$-vectors of a very special class of $0/1$-polytopes called \emph{split matroid polytopes} (see \cite{joswig-schroter,ferroni-schroter}). These matroid polytopes are obtained by splitting a hypersimplex along hyperplanes that do not intersect in its interior: this procedure allows one to create matroid polytopes with many facets, but their number is still far from matching the bounds in \eqref{eq:bound-barany-por}, \eqref{eq:bound-ggm}, and \eqref{eq:bound-friedland}. We were initially interested in non-matroidal splits of a hypersimplex that could potentially achieve a higher number of facets. Although this strategy did not work, it was essential in leading us to the right ideas, as we explain below.

Another, admittedly rough, insight comes from the regular $n$-permutohedron
\[
\Pi_n = \operatorname{ConvexHull} \left\{ \left(\pi(1), \dots, \pi(n)\right) ~:~ \pi\in \mathfrak{S}_n \right\}.
\]
Its facet normals are $0/1$ vectors, and it has $n!$ vertices. Naively, its polar dual polytope (which is not defined in $\mathbb{R}^n$, but let us pretend for a moment that it is) ought to be a $0/1$-polytope with $n!$ facets. One crucial geometric idea underlying our construction is precisely that of preserving the incidence between vertices and facets of the permutohedron.

The natural question is how one can come so close to the factorial upper bound in \eqref{eq:upper-bound}. Note that the unit cube $[0,1]^n$ can be layered by hypersimplices $\Delta_{k,n}$ for $k=0,\ldots,n$ (these are the base polytopes of uniform matroids of rank $k$ on $n$ elements). Any permutation $\sigma\in \mathfrak{S}_n$ determines a sequence of $n+1$ points consisting of exactly one vertex of each of these hypersimplices, and in turn the convex hull of these $n+1$ points yields a maximal simplex in a staircase triangulation of the unit cube. By lifting each hypersimplex $\Delta_{k,n}$ to a height $h_k$, and by choosing the values of the $h_i$'s adequately, one can turn each lifted simplex into a facet. This phenomenon is also reflected in the fact that the Minkowski sum
\[
\Delta_{0,n} + \Delta_{1,n} + \cdots + \Delta_{n,n}
\]
equals the $n$-th permutohedron. Although this naive lift does not quite produce a $0/1$-point configuration, as the reader will find out, with some additional effort we can turn it into one.

\subsection*{Declaration of AI usage}

This paper benefited from substantial use of ChatGPT~Pro~5.6. 
The crucial idea of using the hypersimplex layers of the cube and exploiting the combinatorics of the incidence between vertices and facets of permutohedra was suggested (motivated by human geometric intuition) to ChatGPT by the authors. Subsequently, several subtle details of the construction were figured out by ChatGPT and the authors together: about fifteen guided prompts were needed before a first correct raw proof was completed. This proof was later severely simplified and written up by the authors.
Not a single sentence in this manuscript was generated by any Artificial Intelligence tool.

\subsection*{Acknowledgments}

Federico Castillo was partially supported by ANID Fondecyt Regular Project N°1260970. 
Luis Ferroni is a member of the GNSAGA group of the Istituto Nazionale di Alta Matematica (INdAM).

\section{Preliminaries}

In this paper, the set $ \mathbb{N} $ of natural numbers includes zero.
We denote the set $ \left\{ 1, \dots, n \right\} $ by $ [n] $ .
Throughout, $\log$ denotes the natural logarithm, whereas $\log_2$ denotes the logarithm to base $2$.

Let $ \mathbf{t} = \left\{ t_{n} \right\}_{n\in \mathbb{N}} $ be the sequence of triangular numbers, i.e., $ t_{n} \coloneqq 0 + 1 + \dots + n = n(n+1)/2 $.
Note that $ t_{0} = 0 $.
We define another sequence $ \mathbf{s} = \left\{ s_{n} \right\}_{n \geq 1} $ of \emph{super triangular} numbers using a greedy process.
Set $ s_{1} = 1 $, and then recursively let $ s_{n+1} $ be the largest triangular number not larger than the partial sum $ S_n \coloneqq s_1 + \cdots + s_{n} $.
The first seven terms of the sequence $ \mathbf{s} $ are $ 1,1,1,3,6,10,21 $, and the respective partial sums are $1,2,3,6,12,22,43$.

We say $ m \in \mathbb{N} $ is \emph{representable} by $ \mathbf{s} $ if there exists $ \mathrm{A} \subseteq \mathbb{N}_{>0} $ with
$ \sum_{a \in \mathrm{A}} s_a = m$.

\begin{proposition}\label{prop:saturated}
	Let $ m \geq 1 $.
	Every natural number less than or equal to $ S_{m}$ is representable by $ \mathbf{s} $.
	In particular, the super triangular number $ s_{m+1} $ can be represented using earlier super triangular numbers.
\end{proposition}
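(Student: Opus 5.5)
Induction on $m$, using the standard greedy-completeness criterion: if every integer in $[0,S_m]$ is a subset sum of $s_1,\dots,s_m$ and $s_{m+1}\le S_m+1$, then every integer in $[0,S_{m+1}]$ is a subset sum of $s_1,\dots,s_{m+1}$. Throughout, "representable" means as a sum of \emph{distinct} indices $a\in\mathrm{A}$. We prove the slightly stronger statement that every integer $0\le x\le S_m$ is representable with $\mathrm{A}\subseteq[m]$. This is what the induction needs, and it also gives the second claim directly.

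\textbf{Base case.} For $m=1$ we have $S_1=1$, with $0=\sum_{\emptyset}$ and $1=s_1$.

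\textbf{Inductive step.} Assume the claim for $m$ and let $0\le x\le S_{m+1}=S_m+s_{m+1}$.
\begin{itemize}
\item If $x\le S_m$, the hypothesis represents $x$ using indices in $[m]$.
\item If $x>S_m$, then $0< x-s_{m+1}\le S_m$, since $S_{m+1}=S_m+s_{m+1}$. The hypothesis gives a set $\mathrm{A}\subseteq[m]$ representing $x-s_{m+1}$. Then $\mathrm{A}\cup\{m+1\}$ represents $x$, and the indices stay distinct because $m+1\notin\mathrm{A}$.
\end{itemize}
No bound on $s_{m+1}$ relative to $S_m$ is actually needed for this step. It only uses that the new interval $(S_m,S_{m+1}]$, shifted down by $s_{m+1}$, lands inside the old interval $[0,S_m]$. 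The greedy definition ensures $s_{m+1}\le S_m$ anyway, and $s_{m+1}\ge 0$ is automatic.

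\textbf{Second claim.} By construction $s_{m+1}\le S_m$, so the strengthened statement represents $s_{m+1}$ with indices in $[m]$. These are earlier super triangular numbers.

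\textbf{Main obstacle.} There is essentially no hard step. The only points needing care are these two:
\begin{itemize}
\item The induction must be run on the strengthened, index-restricted statement, so that $m+1$ is genuinely a new index.
\item The second claim needs $s_{m+1}\le S_m$, which holds because $s_{m+1}$ is, by definition, the largest triangular number not exceeding $S_m$. Such a number exists because $t_0=0$ and $t_1=1\le S_m$.
\end{itemize}
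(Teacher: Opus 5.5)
Your induction is the paper's argument: you split at $S_m$, subtract $s_{m+1}$, and your index-restricted strengthening is what the paper uses implicitly when it says ``using a subset of $[k]$''. The proof is correct. One correction to your commentary: the claim that no bound on $s_{m+1}$ is needed is false. Requiring $x-s_{m+1}\ge 0$ for all $x\in(S_m,S_{m+1}]$ is exactly the condition $s_{m+1}\le S_m+1$; otherwise $S_m+1$ is not a subset sum of $s_1,\dots,s_{m+1}$. So the step ``$0<x-s_{m+1}$'' relies on the greedy inequality $s_{m+1}\le S_m$, which you only cite afterwards, and not on $S_{m+1}=S_m+s_{m+1}$ alone.
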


\begin{proof}
We argue by induction on $ m $, the base case $ m = 1 $ being clear.
Assume the statement for $ m = k $ and consider a natural number $ N $.
There are two cases.
\begin{enumerate}
  \item 
		If $ N \leq S_{k} $ then $ N $ is already representable using a subset of $ [k] $ by the induction hypothesis.
  \item
		If $ S_k < N \leq S_{k+1} $, then $0\leq N-s_{k+1}\leq S_k$.
		We use the previous case for $N-s_{k+1}$ and then add $s_{k+1}$.
\qedhere
\end{enumerate}
\end{proof}

The proof of Proposition \ref{prop:saturated} did not use the greedy definition of the sequence $ \mathbf{s}$.
Indeed, the same conclusion holds for the sequence of triangular numbers (after prepending two extra 1s at the beginning).
However, the partial sums of the triangular sequence grow cubically (since the triangular numbers themselves grow quadratically).
The greedy nature of the sequence $ \mathbf{s} $ produces partial sums that grow exponentially.

\begin{proposition}\label{prop:bound}
For every positive integer $ m $ we have that $ S_{m} \geq  \frac{1}{4} 2^m$.
\end{proposition}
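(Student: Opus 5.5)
Since $S_{m+1} = S_m + s_{m+1}$, the bound follows once we show $s_{m+1}$ is at least a constant fraction of $S_m$. We then finish by induction with a check of small cases. The key fact is the elementary gap estimate for triangular numbers: if $X \geq 1$ and $t_k$ is the largest triangular number with $t_k \leq X$, then $X < t_{k+1} = t_k + k + 1$. So the deficit $X - t_k$ is at most $k$, which is roughly $\sqrt{2X}$. Consequently $s_{m+1} \geq S_m - k$ where $t_k \leq S_m$. In particular $s_{m+1} \geq S_m/2$ as soon as $k \leq S_m/2$. Since $t_k = k(k+1)/2 \leq S_m$ forces $k \leq \sqrt{2S_m}$, it suffices that $\sqrt{2 S_m} \leq S_m/2$, i.e.\ $S_m \geq 8$.

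The plan is therefore as follows. First, verify directly from the listed values that the inequality $S_m \geq 2^m/4$ holds for small $m$: $S_1=1\geq 1/2$, $S_2=2\geq 1$, $S_3=3\geq 2$, $S_4=6\geq 4$, $S_5=12\geq 8$, $S_6=22\geq 16$, $S_7=43\geq 32$. Second, for $m\geq 5$ (where $S_m\geq 12\geq 8$), use the gap estimate to get $s_{m+1}\geq S_m/2$. This gives $S_{m+1}\geq \tfrac32 S_m$, which is not quite doubling, so this alone does not suffice.

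The main obstacle is that a ratio of $3/2$ does not propagate the bound $2^m/4$. The fix is to use that the deficit is only about $\sqrt{2S_m}$, not $S_m/2$. Writing $S_{m+1} \geq 2S_m - \sqrt{2S_m}$, we get the cleaner inequality $S_{m+1} - c \geq 2(S_m - c)$ for a suitable constant $c$ once $S_m$ is large, but $\sqrt{2S_m}$ is not bounded. So instead I would prove the stronger inductive statement $S_m \geq 2^{m}/4 + \sqrt{2S_m}$, or more simply track $S_m \geq \alpha 2^m$ with some slack. Concretely, from $S_{m+1}\geq 2S_m-\sqrt{2S_m}$ and $S_m\geq 2^m/4+\epsilon_m$, the induction closes provided $2\epsilon_m \geq \sqrt{2S_m}+\epsilon_{m+1}$.

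Alternatively, and more simply, I would show $S_m \geq 2^{m-2}+2^{m/2}$-type slack. With $S_7=43 = 32+11$, the slack is comfortable. If $S_m \geq 2^{m-2}(1+\delta)$ with the slack $\delta 2^{m-2}$ exceeding $\sqrt{2S_m}\approx 2^{(m-1)/2}$, then doubling doubles the slack while $\sqrt{2S_{m+1}}$ grows only by a factor $\sqrt2$. Hence the slack persists and the induction propagates from $m=7$ onward, with the cases $m\leq 7$ covered by the table.
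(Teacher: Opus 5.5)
Your recursive inequality $S_{m+1}\ge 2S_m-\sqrt{2S_m}$ is correct. It is essentially the paper's \eqref{eq:ineq}, even without the $-1$. The gap is the inductive step you build on it: it does not close as described.

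\textbf{The slack does not double.} Write $\epsilon_m=S_m-2^{m-2}$. Then you only get $\epsilon_{m+1}\ge 2\epsilon_m-\sqrt{2S_m}$. Meanwhile $\sqrt{2S_{m+1}}$ may be as large as $\sqrt2\,\sqrt{2S_m}$, since $S_{m+1}\le 2S_m$. So an invariant of the form $\epsilon_m\ge c\sqrt{2S_m}$ propagates only if $2c-1\ge\sqrt2\,c$, i.e.\ $c\ge 1/(2-\sqrt2)=1+\tfrac{\sqrt2}{2}\approx1.71$. This has two consequences for your choices:
\begin{enumerate}
\item With $c=1$ (your ``slack exceeding $\sqrt{2S_m}$'') the invariant decays from one step to the next.
\item A slack of size $2^{m/2}$ fails the step, because it would need $\sqrt{2S_m}\le(2-\sqrt2)2^{m/2}$, which contradicts $S_m\ge 2^{m-2}$. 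It also narrowly fails the base case, since $2^5+2^{7/2}\approx43.3>43$.
\end{enumerate}

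\textbf{Starting at $m=7$ cannot work at all with this inequality.} The map $x\mapsto 2x-\sqrt{2x}$ is increasing, so the smallest sequence compatible with your recursion is $T_7=43$, $T_{m+1}=2T_m-\sqrt{2T_m}$. It gives $T_8\approx76.7$, $T_9\approx141.1$, $T_{10}\approx265.3$, and $T_{11}\approx507.6<2^9$. The slack at $m=7$ is therefore not ``comfortable'': $\epsilon_7/\sqrt{2S_7}=11/\sqrt{86}\approx1.19$, well below the threshold $1.71$.

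\textbf{How to repair it.} Take $c=1+\tfrac{\sqrt2}{2}$, for which $2c-1=\sqrt2\,c$ exactly. Then $\epsilon_m\ge c\sqrt{2S_m}$ gives
\[
\epsilon_{m+1}\ge(2c-1)\sqrt{2S_m}=c\sqrt{4S_m}\ge c\sqrt{2S_{m+1}},
\]
and in particular $\epsilon_m\ge0$ for all later $m$. The base case fails at $m=9$, since $29<c\sqrt{314}\approx30.25$. It holds at $m=10$, since $S_{10}=310$ gives $54\ge c\sqrt{620}\approx42.5$. So you must also compute $S_8=79$, $S_9=157$, $S_{10}=310$ and check $m\le10$ by hand.

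With that fix your argument becomes a valid alternative to the paper's. The paper normalizes $q_m=S_m/2^m$ and uses $S_m\le 2^m$ to get $q_{m+1}-q_m>-2^{-(m+1)/2}-2^{-(m+1)}$. It then telescopes the geometric error from $S_{12}=1205$, checking $m\le12$ directly. Both routes need the ingredient your proposal lacks: an explicit quantitative margin, taken at a late enough index from actual values of the sequence.
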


\begin{proof}

Let $ N \in \mathbb{N}$.
If $ t_{k} $ is the largest triangular number less than or equal to $ N $, then $ t_k \leq N < t_{k+1} = t_{k} + k + 1 $.
The first inequality implies that $ k \leq \sqrt{2N} $, which combined with the second inequality gives $ N - \sqrt{2N} - 1 < t_{k} $.
Setting $ N = S_m $ and adding $ S_{m} $ on both sides we obtain 
\begin{equation}\label{eq:ineq}
     2S_{m} - \sqrt{2S_{m}} - 1 < S_{m+1},
\end{equation}
since by definition $ S_{m} + t_{k} = S_{m+1}$.
As $t_k \leq S_m$, it also follows that $S_{m+1} \leq 2S_{m}$, so $S_m \leq 2^m$ for all $m$.

Let $q_m = S_m/2^m$.
We have that $q_m \leq 1$ and we aim to show that $q_m > 1/4$.
Dividing by $2^{m+1}$ in \eqref{eq:ineq}, we arrive at 
\[
q_{m+1} - q_m > -2^{-\frac{m+1}{2}}\sqrt{q_m} - 2^{-(m+1)} > -2^{-\frac{m+1}{2}} - 2^{-(m+1)} .
\]
We have that $S_{12} = 1205$.
For $m > 12$ we add the consecutive differences to obtain
\[
q_{m} > q_{12} - \sum_{j=13}^\infty 2^{-\frac{j}{2}} - \sum_{i=13}^\infty  2^{-i} = \frac{1205}{4096} - \frac{1+\sqrt{2}}{64} - \frac{1}{4096} >\frac{1}{4}.
\]
For $m \leq 12$ the inequality can be directly checked.
\end{proof}

\section{The construction}

We work over the vector space $ \mathbb{R}^{m} \oplus \mathbb{R}^{p} $ with canonical bases $ \left\{ \mathbf{e}_{1}, \dots, \mathbf{e}_{m} \right\} $ and $ \left\{ \mathbf{f}_{1}, \dots, \mathbf{f}_{p} \right\} $ respectively.
We treat both factors differently; we think of the second factor as auxiliary.
For any subset $ \mathrm{A} \subseteq [m] $, we define $ \mathbf{e}_{\mathrm{A}} \coloneqq  \sum_{a\in \mathrm{A}} \mathbf{e}_{a} $.

We choose $ p $ to be the smallest positive integer such that $ S_p \geq t_m=m(m+1)/2 $.
Notice that we must have that $ s_{p} \leq t_m $.
By Proposition \ref{prop:bound} we have that
\begin{equation}\label{eq:p_bound}
	p \leq \left\lceil\log_2\bigl(2m(m+1)\bigr)\right\rceil < 2\log_2(m+1) + 2.
\end{equation}

The choice of $ p $ together with Proposition \ref{prop:saturated} guarantees that every triangular number up to $ t_{m} $ is representable by $ \{ s_1, \dots, s_p\}$.
This allows us to define the following vectors in $ \mathbb{R}^{p} $:
For every $ 0 \leq i \leq m $, we define $ \mathrm{R}_{i} $ to be a set representing $ t_{i} $.
If $ t_{i} = s_{j} $ is super triangular for $j\geq 2$, we choose $ \mathrm{R}_{i} $ to be contained in $ [j-1] $, which is possible by Proposition \ref{prop:saturated}.

For each $j$, let $k_j$ satisfy $t_{k_j}=s_j$. Consider the following sets of $ 0/1 $ vectors:
\begin{align*}
	\mathscr{V}_{m} & \coloneqq \left\{ \mathbf{e}_{\mathrm{S}}+\mathbf{f}_{\mathrm{R}_{k}}  \;\middle|\; \mathrm{S} \subseteq [m],\ \#\mathrm{S}=k \right\}, \\
	\mathscr{W}_{m} & \coloneqq \left\{ \mathbf{e}_{\mathrm{T}}+\mathbf{f}_{j}  \;\middle|\; \mathrm{T}\subseteq[m],\ \#\mathrm{T}=k_j,\ 2\leq j\leq p \right\}.
\end{align*}

\begin{theorem}\label{thm:construction}
Let $ m \geq 2 $ be a natural number and $ \pi: [m] \to [m]$ a bijection.
The hyperplane
\begin{equation}
\mathcal{H}_{\pi} \coloneqq 
\left\{ 
\mathbf{x} + \mathbf{y} \in \mathbb{R}^{m} \oplus \mathbb{R}^{p} \;\middle|\; 
\sum_{i=1}^m \pi(i) x_{i} =
\sum_{j=1}^p s_j y_j
\right\},
\end{equation}
intersects the $ (m+p) $-dimensional polytope $ \mathcal{P} = \operatorname{ConvexHull} \left(\mathscr{V}_{m} \cup \mathscr{W}_{m} \right) $ in a simplicial facet.
Therefore, $ \mathcal{P} $ has at least $ m! $ facets.
\end{theorem}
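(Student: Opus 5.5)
The plan is to exhibit $\mathcal{H}_\pi$ as a supporting hyperplane and count the vertices lying on it. Consider the linear functional
\[
L_\pi(\mathbf{x}+\mathbf{y}) = \sum_{i=1}^m \pi(i)x_i - \sum_{j=1}^p s_j y_j ,
\]
so that $\mathcal{H}_\pi=\{L_\pi=0\}$. We show that $L_\pi\geq 0$ on every point of $\mathscr{V}_m\cup\mathscr{W}_m$ and determine exactly where equality holds.

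\emph{Sign of $L_\pi$ on the vertices.} For a vertex $\mathbf{e}_{\mathrm{S}}+\mathbf{f}_{\mathrm{R}_k}$ with $\#\mathrm{S}=k$, the definition of $\mathrm{R}_k$ gives $L_\pi=\sum_{i\in \mathrm{S}}\pi(i)-t_k$. Since $\pi$ is a bijection, a sum of $k$ distinct values of $\pi$ is at least $1+\cdots+k=t_k$. Equality holds iff $\mathrm{S}=\pi^{-1}([k])$.

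Similarly, for $\mathbf{e}_{\mathrm{T}}+\mathbf{f}_j$ with $\#\mathrm{T}=k_j$ we get $L_\pi=\sum_{i\in\mathrm{T}}\pi(i)-s_j\geq t_{k_j}-s_j=0$. Equality holds iff $\mathrm{T}=\pi^{-1}([k_j])$.

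Hence $\mathcal{H}_\pi\cap\mathcal{P}$ is a face whose vertices are exactly
\[
\mathbf{v}_k=\mathbf{e}_{\pi^{-1}([k])}+\mathbf{f}_{\mathrm{R}_k}\ (0\le k\le m), \qquad \mathbf{w}_j=\mathbf{e}_{\pi^{-1}([k_j])}+\mathbf{f}_j\ (2\le j\le p).
\]
That is $m+p$ points, with $\mathbf{v}_0=\mathbf{0}$.

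\emph{Affine independence.} Since $\mathbf{v}_0=\mathbf{0}$, it suffices to show that the $m+p-1$ vectors $\mathbf{v}_1,\dots,\mathbf{v}_m,\mathbf{w}_2,\dots,\mathbf{w}_p$ are linearly independent. Replace each $\mathbf{w}_j$ by $\mathbf{w}_j-\mathbf{v}_{k_j}=\mathbf{f}_j-\mathbf{f}_{\mathrm{R}_{k_j}}$; this does not change the span.

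Here the special choice of the sets $\mathrm{R}_i$ enters: $\mathrm{R}_{k_j}\subseteq[j-1]$. If several $s_j$ coincide, we choose $\mathrm{R}$ inside $[j_0-1]$ for the smallest such index $j_0$, which works for all of them. Thus the vectors $\mathbf{f}_j-\mathbf{f}_{\mathrm{R}_{k_j}}$, for $2\le j\le p$, are unitriangular in the coordinates $\mathbf{f}_2,\dots,\mathbf{f}_p$, and they have zero $\mathbb{R}^m$-part.

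Meanwhile, the $\mathbb{R}^m$-parts of $\mathbf{v}_1,\dots,\mathbf{v}_m$ are the staircase vectors $\mathbf{e}_{\pi^{-1}([k])}$, which form a basis of $\mathbb{R}^m$. A block-triangular argument then gives linear independence: the $\mathbb{R}^m$-components force the coefficients of the $\mathbf{v}_k$ to vanish, and then triangularity forces the remaining coefficients to vanish.

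\emph{Conclusion.} These $m+p$ affinely independent points span an affine space of dimension $m+p-1$ contained in $\mathcal{H}_\pi$.

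To see that $\mathcal{P}$ is $(m+p)$-dimensional, we also need one vertex with $L_\pi>0$. Since $m\geq 2$, any $\mathrm{S}$ of size $1$ other than $\pi^{-1}(1)$ works. Then $\mathcal{P}$ contains $m+p+1$ affinely independent points, and $\mathcal{H}_\pi\cap\mathcal{P}$ is a simplicial facet.

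Distinct $\pi$ yield distinct facets, because the chain $\pi^{-1}([1])\subset\pi^{-1}([2])\subset\cdots$ read off from the vertices $\mathbf{v}_k$ recovers $\pi$. This gives at least $m!$ facets.

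The only delicate point is the independence step, specifically making sure that the convention $\mathrm{R}_{k_j}\subseteq[j-1]$ is consistent when super triangular values repeat (such as $s_1=s_2=s_3=1$). I would handle this explicitly as described above.
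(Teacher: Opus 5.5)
Your proof is correct and follows the paper's argument essentially step for step. It uses the same supporting inequality with equality exactly on the chain $\pi^{-1}([k])$, the same differences $\mathbf{f}_j-\mathbf{f}_{\mathrm{R}_{k_j}}$ forming a unitriangular block with zero $\mathbb{R}^m$-part, the staircase basis $\mathbf{e}_{\pi^{-1}([k])}$ of $\mathbb{R}^m$, and one vertex off the hyperplane for full dimensionality. Your explicit handling of repeated values such as $s_1=s_2=s_3=1$ checks a point the paper leaves implicit; just take the smallest such index $j_0\geq 2$, since $[j_0-1]=[0]$ cannot represent $t_1=1$.
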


\begin{proof}
We first show that every vector in $ \mathscr{V}_{m} \cup \mathscr{W}_{m} $ satisfies the inequality
\begin{equation}
\sum_{i=1}^m \pi(i) x_{i} \geq
\sum_{j=1}^p s_j y_j.
\end{equation}
For any such vector the left-hand side consists of the sum of elements in a subset $ \mathrm{A} \subseteq [m] $.
The right-hand side is the triangular number $ t_{k} $ with $ k = \#\mathrm{A} $, and that is indeed the least value that the sum of $ k $ numbers from $ 1 $ to $ m $ can have.

Equality can only be achieved if the set $ \mathrm{A} $ corresponds to the smallest coefficients.
In other words, $ \mathrm{A} $ is either empty or one of
\[
	\mathrm{B}_k \coloneqq \left\{ \pi^{-1}(1), \dots, \pi^{-1}(k)\right\}, \quad k\in [m].
\]
The set $ \mathscr{U}_{m} \coloneqq \left( \mathscr{V}_{m} \cup \mathscr{W}_{m} \right) \cap \mathcal{H}_{\pi} $ of vectors in the hyperplane has $ m+1 $ points in $ \mathscr{V}_{m} $ and $ p-1 $ in $ \mathscr{W}_{m}$,
for a total of $ m+p $ points.
We now show that these points are affinely independent.
Since one of the points is the origin, this is equivalent to the fact that the nonzero vectors linearly span $ \mathcal{H}_{\pi} $.

For each $ 2\leq j \leq p $ we have a super triangular number $ s_{j} $ that is also the triangular number $ t_{k_j} $.
The vectors $ \mathbf{e}_{\mathrm{B}_{k_j}} + \mathbf{f}_{\mathrm{R}_{k_j}} $ and $ \mathbf{e}_{\mathrm{B}_{k_j}} + \mathbf{f}_{j} $
are both in $\mathscr{U}_{m}$.
Their difference is $ \mathbf{d}_{j} = \mathbf{f}_{j} - \mathbf{f}_{\mathrm{R}_{k_j}}  $.
The $ p \times (p-1) $ matrix having $ \mathbf{d}_{j} $ in column $ j $, restricted to its last $p-1$ rows, is unitriangular; hence the columns are linearly independent, and thus their linear span $ \mathsf{L} $ is $ (p-1) $-dimensional and contained in $ \mathbb{R}^{p} $.

On the other hand, the orthogonal projection of the nonzero vectors $ \mathscr{U}_{m} $ onto $ \mathbb{R}^{m} $ yields the set of vectors
\begin{equation}
	\left\{ \mathbf{e}_{\mathrm{B}_{1}}, \mathbf{e}_{\mathrm{B}_{2}}, \dots, \mathbf{e}_{\mathrm{B}_{m}}   \right\},
\end{equation}
which form a basis for $ \mathbb{R}^m $.
Since the linear subspace $ \mathsf{L} $ has a trivial projection to $ \mathbb{R}^{m} $, we can conclude that the linear span of $ \mathscr{U}_{m} $ is at least $ m + (p-1) $-dimensional.
This linear span is then equal to the hyperplane $ \mathcal{H}_{\pi} $.

As it is clear that the polytope $ \mathcal{P} $ has points which are not on the hyperplane, it follows that it is $ (m+p) $-dimensional and $ \mathcal{H}_{\pi} $ defines a simplicial facet.
As all of these hyperplanes are different, we have at least $ m! $ facets.
\end{proof}

Now we can prove our main result.

\newtheorem*{thm:main-body}{Theorem~\ref{thm:main}}
\begin{thm:main-body}
{\itshape 
    For every $n\geq 10$ there exists an $n$-dimensional $0/1$-polytope $\mathcal{P}\subseteq \mathbb{R}^n$ whose number of facets is at least $(n-\lceil 2\log_2 n\rceil - 1)!$.}
\end{thm:main-body}

\begin{proof}

The main construction produces a polytope of arbitrarily large dimension.
We now adapt the construction so that it covers all dimensions at least $10$. 
For $n=10$, the result follows from the unit cube. 
Hence, assume $n\geq 11$. Choose $p = \lceil 2\log_2 n\rceil + 1$ and $m = n-p$.
We have that $m\geq 2$ when $n\geq 10$.

Let $p'$ be the auxiliary parameter to $m$ in the construction.
We have by \eqref{eq:p_bound}
\[
	p' \leq \left\lceil \log_2\bigl(2m(m+1)\bigr)\right\rceil \leq \left\lceil \log_2(2n^{2})\right\rceil = p,
\]
since $m+1\leq n$.

Now Theorem \ref{thm:construction} provides an $(m+p')$-dimensional $0/1$-polytope $\mathcal{P}$ with at least $m!$ facets, where $m+p' \leq m+p = n$.

Since pyramids preserve the $0/1$ property, by repeatedly taking pyramids over $\mathcal{P}$ we can get a polytope $\mathcal{P}'$ of dimension $n$ with at least $m!=(n-p)!$ facets.
\end{proof}

\section{Asymptotics}

We can now combine the known upper bounds with the lower bound given by the construction. This yields the correct asymptotic behavior of $\log g(n)$, up to an error of $O((\log n)^2)$.

\begin{theorem}
    Let $g(n)$ be the maximum number of facets that an $n$-dimensional $0/1$-polytope can have.
    Then
    \[\log g(n) = n\log n - n + O\big((\log n)^2\big).\]
\end{theorem}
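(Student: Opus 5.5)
The plan is to sandwich $\log g(n)$ between the logarithms of the two factorial bounds and then apply Stirling's formula in the crude form $\log N! = N\log N - N + O(\log N)$.

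For the upper bound, we use the Fleiner--Kaibel--Rote estimate \eqref{eq:upper-bound}: $g(n)\le c(n-2)!$ with $c\le 30$ for all large $n$. This gives
\[
\log g(n) \le \log c + \log (n-2)! = (n-2)\log(n-2) - (n-2) + O(\log n).
\]
Writing $(n-2)\log(n-2) = (n-2)\log n + (n-2)\log(1-2/n) = n\log n - 2\log n + O(1)$, we get $\log g(n)\le n\log n - n + O(\log n)$. This is well within the claimed error.

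For the lower bound, we invoke Theorem~\ref{thm:main}: for $n\ge 10$, $g(n)\ge (n-q)!$ with $q=\lceil 2\log_2 n\rceil+1 = O(\log n)$. Put $N=n-q$, so that $\log g(n)\ge N\log N - N + O(\log N)$. Expand $N\log N = (n-q)\log n + (n-q)\log(1-q/n)$. The second term is $-q+O(q^2/n)$, which is $O(\log n)$. Hence
\[
N\log N - N = n\log n - q\log n - n + O(\log n) = n\log n - n - O\big((\log n)^2\big),
\]
because $q\log n = O((\log n)^2)$. This term is exactly where the $(\log n)^2$ error comes from.

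Combining the two estimates gives $\log g(n) = n\log n - n + O((\log n)^2)$. Small $n$ are absorbed into the implicit constant, since the statement is asymptotic.

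The only real care needed is in the lower-bound expansion. One must check that the shift by $q$ inside the factorial costs $q\log n$ and not more: the correction $(n-q)\log(1-q/n)$ must be $O(q)$ rather than something larger. This follows from $|\log(1-x)|\le 2x$ for $x\le 1/2$, which holds once $n$ is large enough that $q\le n/2$.
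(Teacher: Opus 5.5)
Your proposal is correct and follows essentially the same route as the paper: it sandwiches $g(n)$ between $(n-\lceil 2\log_2 n\rceil-1)!$ from Theorem~\ref{thm:main} and the Fleiner--Kaibel--Rote bound $c(n-2)!$, then applies Stirling's formula and expands $\log(n-q)$ around $\log n$. The only difference is cosmetic: the paper states the dominant lower-order term explicitly as $-\frac{2}{\log 2}(\log n)^2$, whereas you only bound $q\log n = O\big((\log n)^2\big)$, which is all the statement requires.
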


\begin{proof}
Using the construction in the proof of Theorem \ref{thm:main}, we obtain
\[
g(n) \geq (n-f(n))!, \qquad f(n)=\lceil2\log_2 n\rceil+1=O(\log n).
\]

Taking logarithms and using Stirling's approximation\footnote{In the form $\log(n!) = n\log n - n + O(\log n)$.}, we obtain
\[
\log g(n) \geq (n-f(n))\log(n-f(n)) - n + f(n) + O\bigl(\log(n-f(n))\bigr).
\]
Expanding $\log(n-f(n)) = \log n - f(n)/n + O\bigl(f(n)^2/n^2\bigr)$, using that $f(n)=\frac{2}{\log 2}\,\log n+O(1)$, and collecting the error terms, all dominated by $O(\log n)$, we arrive at

\[
    \log g(n) \geq n\log n - n - \frac{2}{\log 2}(\log n)^2 + O(\log n). 
\]

On the other hand, the known upper bound of $g(n)\leq 30(n-2)!$ gives
\[
\log g(n) \leq n\log n-n+O(\log n).
\]
Together with the preceding lower bound, this shows that $\log g(n) = n\log n - n + O((\log n)^2)$.
\end{proof}

\section{Final remarks}

Our theorem determines the asymptotics of $\log g(n)$, but leaves open how far $g(n)$ is from $n!$. Let us define
\[
    D(n)\coloneqq\log\left(\frac{n!}{g(n)}\right).
\]
Our main result and the upper bounds in \cite{fleiner2000upper} give
\[
    2\log n-O(1)
    \leq D(n)
    \leq \frac{2}{\log 2}(\log n)^2+O(\log n).
\]
(Note that the upper bound in the above display comes from our Theorem~\ref{thm:main}, and the lower bound comes from \cite[Corollary~8]{fleiner2000upper}.)
The natural question is whether $D(n)$ has order $\log n$ or $(\log n)^2$. We believe the latter, i.e., in a certain sense our lower bound for $g(n)$ is closer to the true value of $g(n)$ than the upper bound in \cite{fleiner2000upper}.

A few comments are in order. The factor $2$ in our construction should not be taken too seriously; we believe it can be improved. Preserving the notation of the main proof, the $m!$ special facets in our construction come from the maximal chains through the $m+1$ hypersimplex layers. Any binary lift of this type needs about $\log_2 m$ auxiliary coordinates only to distinguish the layers; the additional points needed to span the auxiliary directions do not appear especially relevant. Our construction in the present paper spends roughly twice $\log_2 m$ as it represents every integer up to $t_m=\Theta(m^2)$, although only $m+1$ heights are used. Using the present greedy strategy this factor $2$ is unavoidable, since $S_p\leq2^{p-1}$. Removing it would require a different lifting.

At the other end, the factorial upper bound in \eqref{eq:upper-bound} counts distinct integral normal vectors under a global $\ell_1$ budget \cite{fleiner2000upper}. This retains the size of the normal vectors but washes out most of the compatibility needed for all of them to support facets of a single $0/1$-polytope. To us, $(n-2)!$ looks more like the outcome of a generous counting argument than the shadow of an extremal example. We dare to formulate the following deliberately provocative conjecture.

\begin{conjecture}\label{conj:factorial-defect}
There exists an absolute constant $C>0$ such that
\[
    \lim_{n\to\infty}
    \frac{\log(n!)-\log g(n)}{(\log n)^2}
    =C.
\]
Equivalently,
\[
    g(n)
    =
    n!\exp\left(-(C+o(1))(\log n)^2\right)
    =
    n!\,n^{-(C+o(1))\log n}.
\]
\end{conjecture}

Roughly speaking, we conjecture that $g(n)$ behaves more like $(n-\lceil2\log_2 n\rceil-1)!$ than $(n-2)!$. A completely different construction could prove us wrong, but we would be quite surprised. Even the weaker estimate $D(n)=\omega(\log n)$ would already rule out $(n-c)!$ as the correct scale for any fixed integer $c$.

\bibliographystyle{amsalpha}
\bibliography{bibliography}

\end{document}